\documentclass[12pt, draftcls, onecolumn]{IEEEtran}

\usepackage[colorlinks=false,urlcolor=blue,citecolor=blue,linkcolor=blue,bookmarks=true,bookmarksopen=false,pdftitle=created by dvipdf,pdfcreator=NM,pdfauthor=Nayyar,pdfsubject=mor.sty:6.29.04]{hyperref}
\usepackage{amsmath,amsthm,amstext,amsfonts,amssymb,mathrsfs}
\usepackage{graphicx,subfigure,color,algorithm,algorithmic,fullpage,setspace}

\usepackage{accents}
\newlength{\dhatheight}
\newcommand{\doublehat}[1]{%
    \settoheight{\dhatheight}{\ensuremath{\hat{#1}}}%
    \addtolength{\dhatheight}{-0.25ex}%
    \hat{\vphantom{\rule{1pt}{\dhatheight}}%
    \smash{\hat{#1}}}}

\usepackage{personal}

\textheight     9.0truein
\textwidth      6.5truein
\IEEEoverridecommandlockouts
\newtheorem{remark}{Remark}

\title{Optimal Decentralized Control with Asymmetric One-Step Delayed Information Sharing}
\author{Naumaan Nayyar, Dileep Kalathil and Rahul Jain\thanks{The authors are with the Department of Electrical Engineering, University of Southern California, Los Angeles, CA 90089. Emails: {\tt (nnayyar,manisser,rahul.jain)@usc.edu}.}\thanks{This research was supported by AFOSR grant FA9550-10-1-0307, the NSF CAREER Award CNS-0954116 and the ONR Young Investigator Award N000141210766. Parts of this paper have been presented in ACC 2014, without all proof details and only the $(1,\infty)$ case. Theorems 4 and 6 are new, which now fully solve the problem.}}

\begin{document}

\maketitle

\begin{abstract}
We consider optimal control of decentralized LQG problems for plants having nested subsystems controlled by two players with asymmetric information sharing patterns between them. In the main scenario, the players are assumed to have a unidirectional error-free, unlimited-rate communication channel with a unit delay in one direction and no communication in the other. A second model, presented for completeness, considers a channel with no delay in one direction and a unit delay in the other. Delayed information sharing patterns do not, in general, admit linear optimal control laws and are thus difficult to control optimally. However, in these scenarios, we show that the problems have a partially nested information structure, and thus linear optimal control laws exist. Summary statistics are identified and analytical solutions to the optimal control laws are derived. State and output feedback cases are solved for both scenarios.
\end{abstract}

\vspace*{-0.20in}

\section{Introduction}

Recently, many problems of decentralized `non-classical' control have been looked at in the context of practical systems~\cite{RoLa06, LaChDa04, MoJa08}. Examples include cyberphysical systems, formation flight, and other networked control systems wherein multiple agents try to achieve a common objective in a decentralized manner. Such situations arise, for example, due to controllers not having access to the same information. One possible reason is network delays, and a consequent time-lag to communicate observations to other controllers.

These problems were first formulated by Marschak in the 1950s \cite{Ma5x} as team decision problems, and further studied by Radner \cite{Ra62}, though in such problems communication between the controllers was usually ignored. 
In a celebrated paper \cite{Wi68}, Witsenhausen  showed that even for seemingly simple systems with just two players, non-linear controllers could outperform any linear controller. Witsenhausen also consolidated and conjectured results on separation of estimation and control in decentralized problems in~\cite{Wi71}. However, the structure of decentralized optimal controllers for LQG systems with time-delays has been hard to identify. Indeed, in \cite{VaWa78} it was proven that the separation principle that was conjectured for delayed systems does not hold for a system having a delay of more than one timestep. The more general delayed information sharing pattern was only recently solved by Nayyar, et al. in \cite{NaMaTe11}. It is an open problem to actually compute the optimal decentralized control law for the $n$-player setting even using such a structural result.

However, not all results for decentralized control problems with delayed information sharing patterns have been negative. Building on results of Radner on team decision theory, Ho and Chu \cite{HoCh72} showed that for a unit-delay information sharing pattern, the optimal controller is linear. This was used by Kurtaran and Sivan \cite{KuSi74}, and others~\cite{SaAt74,Yo75} to derive optimal controllers for the finite-horizon case. It is also known that for a class of problems having partially nested structures, person-by-person optimality implies global optimality~\cite{MaMaRo12}. These approaches do not extend to multi-unit delayed sharing patterns as they do not possess partially nested structures.

A criterion for determining convexity of optimal control problems called \textit{funnel causality}, was presented in~\cite{BaVo05}. Recently, another characterization, \textit{quadratic invariance}, was discovered under which optimal decentralized controls laws can be solved by convex programming \cite{RoLa06}. This has led to a resurgence of interest in decentralized optimal control problems, which since the 1970s had been assumed to be intractable. Subsequently, in a series of papers, Lall, Lessard and others have computed the optimal control laws for a suite of decentralized nested LQG systems \cite{SwLa10,SwLa11,LeNa13}. However, none of these papers consider the effect of communication delays. More general networked structures that dealt with state-feedback delayed information sharing in networked control with sparsity were also looked at in~\cite{LaLe12, LaLe14}, wherein recursive solutions for optimal control laws were derived. Multi-player control using output feedback without delay was considered in~\cite{NaLe14}. Time varying delays in 2-player problems, restricted to state feedback, have known explicit solutions~\cite{MaLaDo14}.

For a subclass of quadratic invariant problems known as \textit{poset-causal} problems, Parrilo and Shah \cite{ShPa10} showed that the computation of the optimal law becomes easier by lending itself to decomposition into subproblems. Solutions to certain cases of the state-feedback and output-feedback delayed information sharing problem have also been presented by Lamperski et al.~\cite{LaDo12} where a networked graph structure of strongly connected controllers is considered, with constraints on system dynamics. 
In this work, we restrict our attention to two-player systems. A summary of results pertaining to these systems is given in Table \ref{tab:lit-survey}.

\vspace*{-0.05in}
\begin{table}[htb]
\centering
%
\begin{tabular}{|@{} c @{}|@{} c @{}|@{} c @{}| c @{}|}
\hline
\textbf{$(d_{12},~d_{21})$} & \textbf{Literature} & \textbf{Comments}\\ \hline
$(0,0)$ & Classical & no plant restrictions\\ \hline
$(1,1)$ & \cite{KuSi74},\cite{SaAt74},\cite{Yo75} & no plant restrictions\\ \hline
$(0 \text{ or } 1, 0 \text{ or } 1)$ & \cite{LaDo12,LaDo13} & b.d B matrix, inf. horizon\\ \hline
$(0,\infty)$ & \cite{SwLa10,SwLa11,LeNa13} & l.b.t matrix\\ \hline
$(1,\infty)$ & \cite{LaLe12} & l.b.t matrix, state f/b, u.c\\ \hline
$(\infty,\infty)$ & \cite{Le12} & b.d. dynamics, state f/b \\ \hline
$(1,0)$ & here & no plant restrictions\\ \hline
$(1,\infty)$ & here & l.b.t matrix\\ \hline
\end{tabular}
\caption{Summary of results for some information sharing patterns with two-players.}
\label{tab:lit-survey}
\end{table}
\vspace{-5mm}
In Table \ref{tab:lit-survey}, $d_{12}$ is the delay in information transmission from player 1 to player 2, and vice versa. 'u.c', 'b.d' and 'l.b.t' refer to uncoupled noise, block diagonal and lower block triangular matrices respectively. Note that, in ~\cite{LaLe12}, noise vectors are restricted to be independent between subsystems, unlike in our present work.

In this paper, we consider two asymmetric scenarios, $(1,0)$ and $(1,\infty)$ information sharing patterns between two players in an LQG system.
Both scenarios have a partially nested information structure and also satisfy quadratic invariance. They are not poset-causal and do not lend themselves to easy decompositions. We derive optimal control laws in both cases. This results in Riccati-type iterations for computing the gain matrix of the optimal control law which is linear. This paper extends our work presented in ACC 2014~\cite{NaKaJa14} to derive explicit optimal controllers for the output feedback case.

\vspace*{-0.14in}
\section{The $(1,\infty)$ information sharing pattern}\label{sec:problem}

\vspace*{-0.03in}
\subsection*{State-feedback case}

\vspace*{-0.03in}
We consider a coupled two-player discrete linear time-invariant system with nested structure. Player 1's actions affect block 1 of the plant whereas player 2's actions affect both blocks 1 and 2. The system dynamics are,
\vspace*{-0.05in}
\begin{align}
\label{eq:nestedsystem}
\begin{bmatrix} x_1(t+1)\\ x_2(t+1) \end{bmatrix} 
=
&\begin{bmatrix} A_{11} & 0\\ A_{21} & A_{22} \end{bmatrix} 
\begin{bmatrix} x_1(t)\\ x_2(t) \end{bmatrix} +\nonumber\\ &\begin{bmatrix} B_{11} & 0\\ B_{21} & B_{22} \end{bmatrix} 
\begin{bmatrix} u_1(t)\\ u_2(t) \end{bmatrix} + \begin{bmatrix} v_1(t)\\ v_2(t) \end{bmatrix},
\end{align} 
for $t \in \{0,1,\cdots,N-1\}$.

We will use the notation $x(t+1) = A x(t) + B u(t) + v(t)$ where definition is clear from context. The initial state $x(0)$ is assumed to be zero-mean Gaussian, independent of the system noise $v(t)$, and $v(t)$ is zero-mean Gaussian with covariance $V$, which is independent across time $t$.

In the formulation above, restricting the plant dynamics to be lower block triangular ensures that optimal control laws exist that are linear in the observations. Without this assumption, the system fails the only known sufficiency test for existence of linear optimal laws for two players~\cite{HoCh72}.

The two players have a common (team) objective to control the system to minimize,
\begin{equation}\label{eq:costfn}
\bbE[\sum_{t=0}^{N-1}(x(t)'Qx(t) + u(t)'Ru(t)) + x(N)'Sx(N)],
\end{equation}
where $Q$, $R$ and $S$ are positive definite matrices. At each instant, the control action $u_i(t)$ taken by each player can only depend on the \textit{information structure}, the information available to them denoted by $\sF(t)=(\sF_1(t),\sF_2(t))$. The information structure for the problem is,
\begin{align}\label{eq:infostructure}
\sF_1(t) = \{&x_1(0:t),u_1(0:t-1)\},\nonumber\\
\sF_2(t) = \{&x_1(0:t-1),u_1(0:t-1);\nonumber\\
&~~ x_2(0:t),u_2(0:t-1)\}.
\end{align}

The set of all control laws is characterized by $u_i(t)=f(\sF_i(t), t)$ for some time-varying function $f$. In the next subsection, we show that the optimal law is linear.


\subsubsection{Linearity of the optimal control law}\label{sec:linearity}


It was shown by Varaiya and Walrand~\cite{VaWa78} that the optimal decentralized control law with general delayed information sharing between players may not be linear. However, Ho and Chu~\cite{HoCh72} had established earlier that to show the existence of a linear optimal control law, it is sufficient to prove that the LQG problem has a \textit{partially nested information structure}~\cite[Theorem 2]{HoCh72}.

Formally, if player $i$'s actions at time $t_1$ affect player $j$'s information at time $t_2$, then a partially nested information structure is defined to have $\sF_i(t_1) \subseteq \sF_j(t_2)$. Intuitively, this can be described as an information pattern that allows communication of all information used by a player to make a decision to all other players whose system dynamics are affected by that decision. We now show that the $(1,\infty)$ pattern has a partially nested structure.

\begin{proposition}\label{lem:linearity}
The problem~\eqref{eq:nestedsystem}-\eqref{eq:costfn} with information structure~\eqref{eq:infostructure} has a linear optimal control law.
\end{proposition}
\begin{proof}
Observe that the information structure defined in~\eqref{eq:infostructure} can be simplified as $\sF_1(t) = \{x_1(0:t)\}$ and $\sF_2(t) = \{x_1(0:t-1),x_2(0:t) \}$ as the observations can be used to determine inputs. Because of the nested system structure, viz. $A_{12} = B_{12} = 0$, the propagation of inputs in the plant dynamics is as shown in Fig~\ref{fig:optonedelayonecomm}. Additionally, at any time instant $t$, $\sF_i(t) \subset \sF_i(t+\tau), \tau \geq 0, i = 1,2$ and $\sF_1(t) \subset \sF_2(t+\tau), \tau \geq 1$. This implies that the information communication diagram is also the same as in Figure~\ref{fig:optonedelayonecomm}.
\begin{figure}[tbh]
\centering{\includegraphics[scale=0.8]{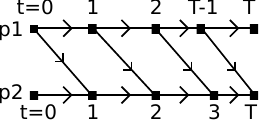}}
\caption{Information communication in $(1,\infty)$-delayed sharing pattern in two-player D-LTI systems with nested structures}
\label{fig:optonedelayonecomm}
\end{figure}

Thus, the system described with its information structure has a partially nested structure \cite[Theorem 2]{HoCh72}, and consequently, a linear optimal control law exists.
\end{proof}

Denote by $\sH_i(t) = ( x_i(0:t-1), u_i(0:t-1))$, the history of observations for block $i$. Using linearity of the optimal law, it follows that $u(t)=[u_1(t),~u_2(t)]'=$
\begin{equation}\label{eq:optlinlaw}
\begin{bmatrix}
F^*_{11}(t)x_1(t) + L_1(\sH_1(t)) \\
F^*_{22}(t)x_2(t) + L_2(\sH_1(t),\sH_2(t))
\end{bmatrix},
\end{equation}
where $L_i(\cdot)$ are linear, possibly time-varying, functions. However, the arguments of $L_i$, namely the output observations, increase with time. So, obtaining a closed form expression for the optimal control law is intractable in the current form, necessitating use of summary statistics.

\subsubsection{Summary Statistic to Compute Optimal Control Law}\label{sec:optimality}

\begin{table}[b]
\centering
\begin{tabular}{| c | c |} \hline
  $\sH_i(t)$ & $\{ x_i(0:t-1), u_i(0:t-1)\}$\\ \hline
  $\sF_1(t)$ & $\{ x_1(t),\sH_1(t)\}$\\ \hline
  $\sF_2(t)$ & $\{ x_2(t),\sH_1(t), \sH_2(t)\}$\\ \hline
  $\hat{x}(t)$ & $\bbE[x(t) | \sH_1(t)]$\\ \hline
  $\doublehat{x}(t)$ & $\bbE[x(t) | \sH_1(t),\sH_2(t)]$\\ \hline
  $\phi(t)$ & $x_1(t) - \hat{x}_1(t)$\\ \hline
  $\overline{\phi}(t)$ & $x(t) - \doublehat{x}(t)$\\ \hline
\end{tabular}
\caption{A glossary of notations used in the following sections}
\label{tab:glossary-terms}
\end{table}
We now show that the estimate of the current state given the history of observations of the players is a summary statistic that characterizes the optimal control law. The notation used is summarized in Table \ref{tab:glossary-terms}.

Note that  $\hat{x}(t)$ and $\doublehat{x}(t)$ are estimates of the current states by Players $1$ and $2$ respectively.
\begin{table}[t]
\centering
\begin{tabular}{| c | c |} \hline
  $\overline{T}(t)$ & $\bbE[(x(t) - \doublehat{x}(t))(x(t) - \doublehat{x}(t))]$ \\ \hline
  $T(t)$ &  $\bbE[(x(t) - \hat{x}(t))(x_1(t) - \hat{x}_1(t))]$ \\ \hline
  $\overline{R}_{x}(t)$ & $\overline{T}(t)$ \\ \hline
 $\overline{R}_{xx}(t)$ & $A\overline{T}(t)$  \\ \hline
  $R_{x_1}(t)$ & $T_1(t)$ \\ \hline
  $R_{xx_1(t)}$ & $AT(t)$\\ \hline
  $\overline{\phi}(t)$,$\phi(t)$ & $x(t) - \doublehat{x}(t)$,$x_1(t) - \hat{x}_1(t)$\\ \hline
  $\overline{\sR}(t)$ & $\overline{R}_{xx}(t)\overline{R}_{x}^{-1}(t)$\\ \hline
  $\sR(t)$ & $R_{xx_1}(t)R_{x_1}^{-1}(t)$\\ \hline
  $\hat{u}_2(t)$ & $\bbE[u_2(t) | \sH_1(t)]$\\ \hline
\end{tabular}
\caption{A list of notations used in Lemma 1}
\label{tab:lemma-1-notation}
\end{table}

\begin{lemma}\label{lem:estimatordynamics}
For the nested system~\eqref{eq:nestedsystem}, the estimator dynamics for $\hat{x}(t)$ and $\doublehat{x}(t)$ are given by
\begin{equation}
\hat{x}(t+1) = A \hat{x}(t) +  B\begin{bmatrix} u_1(t)\\
\hat{u}_2(t)
\end{bmatrix} + \sR(t)\phi(t),
\label{eq:estdyn1}
\end{equation}
\vspace*{-0.1in}
\begin{equation}
\doublehat{x}(t+1) = A \doublehat{x}(t) +  B\begin{bmatrix} u_1(t)\\
u_2(t)
\end{bmatrix} + \overline{\sR}(t) \overline{\phi}(t),
\label{eq:estdyn2}
\end{equation}

with $\hat{x}(0) = \doublehat{x}(0) = \bbE[x(0)]$. $\overline{\sR}(t) = A$. $\phi(t)$ and $\overline{\phi}(t)$ are zero-mean, uncorrelated Gaussian random vectors and $\bbE[\phi(t)\phi(t)'] = T_1(t)$ and $\bbE[\overline{\phi}(t)\overline{\phi}(t)'] = \overline{T}(t)$. At each time step, $\phi(t)$ is uncorrelated with $\sH_1(t)$, and so $\bbE[\phi(t)x_1(\t)'] = \mathbf{0}$ for $\tau < t$. $\overline{\phi}(t)$ is uncorrelated with $\sH_1(t),~\sH_2(t)$, and so $\bbE[\overline{\phi}(t)x(\t)'] = \mathbf{0}$ for $\tau < t$.
\end{lemma}
\begin{proof}
The proof is in the Appendix.
\end{proof}

\vspace*{-0.17in}
Before showing that $\hat{x}_1(t)$ and $\doublehat{x}_2(t)$ are summary statistics for the history of state observations, we first obtain the optimal decentralized control law for the LQG problem~\eqref{eq:nestedsystem}-\eqref{eq:costfn} with a nested information structure having no communication delay - i.e., $(0,\infty)$. The information structure here is described by,
\begin{align}\label{eq:infostructureLeNa}
z_1(t) = \{&x_1(0:t),u_1(0:t-1)\},\\
z_2(t) = \{&x_1(0:t),u_1(0:t-1); x_2(0:t),u_2(0:t-1)\}.\nonumber 
\end{align}

Proposition~\ref{lem:lall} considers the $(0,\infty)$ state-feedback problem, already solved for output-feedback in~\cite{LeNa13}:
\begin{proposition}\label{lem:lall}
The optimal control law for the LQG problem~\eqref{eq:nestedsystem}-\eqref{eq:costfn} with information structure~\eqref{eq:infostructureLeNa} is,
\begin{equation*}
\begin{bmatrix}
u_1^*(t)\\
u_2^*(t)
\end{bmatrix} = \begin{bmatrix}
K_{11}(t) & K_{12}(t) & \mathbf{0} \\
K_{21}(t) & K_{22}(t) & J(t)
\end{bmatrix} \begin{bmatrix}
x_1(t)\\
\tilde{x}_2(t)\\
x_2(t) - \tilde{x}_2(t)
\end{bmatrix},
\end{equation*}
\begin{align}
\label{eq:LeNadynamics}
\tilde{x}_2(t+1) &= A_{21}x_1(t) + A_{22}\tilde{x}_2(t) + B_{21} u_1(t) + B_{22}\tilde{u}_2(t),\nonumber\\
\tilde{x}_2(0) &= 0,
\end{align}
where $\tilde{u}_2(t) = \begin{bmatrix}K_{21}(t) & K_{22}(t)\end{bmatrix} \begin{bmatrix} x_1(t)\\\tilde{x}_2(t) \end{bmatrix}$.
$K(t)$ is obtained from the backward recursion,
\begin{align*}
P(T) &= S,\\
P(t) &= A'P(t+1)A + A'P(t+1)BK(t) + Q,\\
K(t) &= -(B'P(t+1)B + R)^{-1}B'P(t+1)A,
\end{align*}
and $J(t)$ is obtained from the backward recursion,

\begin{align*}
&\tilde{P}(T) = S_{22},\\
&\tilde{P}(t) = A_{22}'\tilde{P}(t+1)A_{22} + A_{22}'\tilde{P}(t+1)B_{22}J(t) + Q_{22},\\
&J(t) = -(B_{22}'\tilde{P}(t+1)B_{22} + R_{22})^{-1}B_{22}'\tilde{P}(t+1)A_{22}.
\end{align*}
\end{proposition}
\begin{proof}
The proof follows by use of Theorems 10 and 12 of~\cite{LeNa13}, and replacing the information structure with the state-feedback structure given in~\eqref{eq:infostructureLeNa}. The conditional expectations reduce to $\textbf{z}(t) := \bbE[x(t) | z_2(t)] = x(t)$ and $\hat{\textbf{z}}(t) := \bbE[x(t) | z_1(t)] = \begin{bmatrix} x_1(t)\\ \tilde{x}_2(t)\end{bmatrix}$, where $\tilde{x}_2(t) := \bbE[x_2(t) | z_1(t)]$. The optimal control law is,
\begin{align*}
u(t) = K(t)\hat{\textbf{z}}(t) + \hat{K}(t)(\textbf{z}(t) - \hat{\textbf{z}}(t)),
\end{align*}
where $\hat{K}(t) = \begin{bmatrix}
0 & 0 \\
\hat{K}_{21} & J(t).
\end{bmatrix}$

Substituting $\textbf{z}(t)$ and $\hat{\textbf{z}}(t)$, the control law reduces to,
\begin{align*}
u(t) &= K(t)\begin{bmatrix}
x_1(t)\\
\tilde{x}_2(t)
\end{bmatrix} + \hat{K}(t)\begin{bmatrix}
0\\
x_2(t) - \tilde{x}_2(t)
\end{bmatrix}\\
     &= \begin{bmatrix}
K_{11}(t) & K_{12}(t) & \mathbf{0} \\
K_{21}(t) & K_{22}(t) & J(t)
\end{bmatrix} \begin{bmatrix}
x_1(t)\\
\tilde{x}_2(t)\\
x_2(t) - \tilde{x}_2(t)
\end{bmatrix}.
\end{align*}

Consequently, we also have, $\tilde{u}_2(t) := \bbE[u_2(t) | z_1(t)] = \begin{bmatrix}K_{21}(t) & K_{22}(t)\end{bmatrix} \begin{bmatrix} x_1(t)\\x_2(t) \end{bmatrix}$, which when substituted in~\eqref{eq:nestedsystem}, gives ~\eqref{eq:LeNadynamics}.
\end{proof}
We now establish the sufficiency of $\hat{x}_1(t)$ and $\doublehat{x}_2(t)$.
\begin{theorem}\label{thm:optcontrollerform}
For the nested system~\eqref{eq:nestedsystem} with the information structure~\eqref{eq:infostructure} and objective function~\eqref{eq:costfn}, the optimal control law is given by
\begin{equation}
u(t)  =  F^*(t)\begin{bmatrix}
x_1(t) - \hat{x}_1(t)\\
x_2(t) - \doublehat{x}_2(t)
\end{bmatrix} + \tilde{K}(t),
\label{eq:optcontroller}
\end{equation}
where $F^*(t) = \begin{bmatrix}
F^*_{11}(t) &  0\\
0 & F^*_{22}(t)
\end{bmatrix}$ and $\tilde{K}(t)$ is given by
\begin{align*}
\begin{bmatrix}
K_{11}(t) & K_{12}(t) & \mathbf{0} \\
K_{21}(t) & K_{22}(t) & J(t)
\end{bmatrix} \begin{bmatrix}
\hat{x}_1(t)\\
\hat{x}_2(t)\\
\doublehat{x}_2(t) - \hat{x}_2(t)
\end{bmatrix},
\end{align*}
where $K(t)$ and $J(t)$ are as in Proposition \ref{lem:lall}.
\end{theorem}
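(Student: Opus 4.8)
The plan is to mirror the transformation used for the $(1,0)$ pattern in Theorem~\ref{thm:optcontrollerform10}, but with the classical LQR step replaced by the instantaneous-communication solution of Proposition~\ref{lem:lall}. Since Proposition~\ref{lem:linearity} already guarantees that the optimal law is linear and of the form~\eqref{eq:optlinlaw}, the task reduces to pinning down the linear functions $L_1,L_2$ of the common histories. I would do this by splitting each player's observed state into a part measurable with respect to the common histories and a private innovation: $x_1(t) = \hat{x}_1(t) + \phi(t)$ for player~$1$ and $x_2(t) = \hat{\hat{x}}_2(t) + \overline{\phi}_2(t)$ for player~$2$, where $\phi$ and $\overline{\phi}$ are the innovations of Lemma~\ref{lem:estimatordynamics}. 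The candidate law~\eqref{eq:optcontroller} is then exactly a decentralized feedback $F^*(t)$ on the innovation vector plus a feedback $-\tilde{K}(t)$ on the common estimates.

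First I would verify admissibility, i.e.\ that each player can compute its row of~\eqref{eq:optcontroller} from its own information $\sF_i(t)$. Player~$1$ forms $\phi(t)=x_1(t)-\hat{x}_1(t)$ from $x_1(t)$ and $\sH_1(t)$ and reads off $\hat{x}_1(t),\hat{x}_2(t)$ from $\sH_1(t)$; the block-zero structure of $F^*$ together with the zero in the $(1,3)$ block of the gain inside $\tilde{K}$ ensures its control never requires $x_2$ or $\hat{\hat{x}}_2$. Player~$2$ forms $\overline{\phi}_2(t)=x_2(t)-\hat{\hat{x}}_2(t)$ and all of $\hat{x}_1,\hat{x}_2,\hat{\hat{x}}_2$ from $\{\sH_1(t),\sH_2(t)\}$. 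This is precisely why the gain inside $\tilde{K}(t)$ is taken with the same sparsity pattern as the instantaneous optimal gain of Proposition~\ref{lem:lall}.

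The substantive step is optimality, which I would obtain by an orthogonal decomposition of the quadratic cost~\eqref{eq:costfn}. Using Lemma~\ref{lem:estimatordynamics} --- in particular that $\phi(t)$ and $\overline{\phi}(t)$ are zero-mean, mutually and temporally uncorrelated, and orthogonal to the past states --- I would show that the cross terms between the common-estimate trajectory and the innovation-driven fluctuations vanish in expectation, so that the expected cost separates into a term depending only on $(\hat{x},\hat{\hat{x}})$ and the gains $(K,J)$, plus a term depending only on the innovations and $F^*$. The first term is the cost of a virtual instantaneous-communication problem: under the identification $\hat{x}_1(t)\leftrightarrow x_1(t)$, $\hat{x}_2(t)\leftrightarrow \tilde{x}_2(t)$ and $\hat{\hat{x}}_2(t)-\hat{x}_2(t)\leftrightarrow x_2(t)-\tilde{x}_2(t)$, the estimator recursions~\eqref{eq:estdyn1}--\eqref{eq:estdyn2} reproduce the dynamics of Proposition~\ref{lem:lall}, the residual driving terms $R(t)\phi(t)$ and $\overline{R}(t)\overline{\phi}(t)$ being orthogonal to the estimates and hence absorbed into the innovation term. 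Proposition~\ref{lem:lall} then identifies the minimizer of this piece as $-\tilde{K}(t)$, while the innovation piece is a separate deterministic quadratic optimization whose minimizer is the gain $F^*(t)$ characterized next (the analog for this pattern of Theorem~\ref{thm:optgainmatrix10}).

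I expect the main obstacle to be making the reduction to Proposition~\ref{lem:lall} rigorous, because two distinct common estimators appear --- $\hat{x}(t)$ built from player~$1$'s history and $\hat{\hat{x}}(t)$ built from the joint history --- and the innovation $\phi(t)=x_1(t)-\hat{x}_1(t)$ is orthogonal to $\sH_1(t)$ but not automatically to $\sH_2(t)$. The delicate point is therefore to verify that the quadratic cross terms genuinely cancel with this two-level conditioning in place, and that the recursions~\eqref{eq:estdyn1}--\eqref{eq:estdyn2} close into a well-posed virtual system on $(\hat{x}_1,\hat{x}_2,\hat{\hat{x}}_2)$ matching the instantaneous structure, so that the separation into the $\tilde{K}$ sub-problem and the $F^*$ sub-problem is exact rather than approximate.
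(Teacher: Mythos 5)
Your overall strategy is the same as the paper's: split the states into common-history estimates plus private innovations, show the expected cost decouples, reduce the estimate part to the instantaneous-communication problem of Proposition~\ref{lem:lall}, and read off $-\tilde{K}(t)$ from that proposition and $F^*(t)$ from a separate deterministic optimization. However, you explicitly leave unresolved the step you yourself flag as the main obstacle --- why the cross terms cancel when $\phi(t)=x_1(t)-\hat{x}_1(t)$ is only guaranteed orthogonal to $\sH_1(t)$ and not to $\sH_2(t)$ --- and that is precisely the step the paper has to, and does, supply. The resolution is Lemma~\ref{lem:equivestimators}: because $A_{12}=B_{12}=0$, block~1 evolves autonomously from $u_2$ and $x_2$, so $\hat{\hat{x}}_1(t)=\hat{x}_1(t)$ and hence $\phi(t)=\overline{\phi}_1(t)$. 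This upgrades the orthogonality of $\phi(t)$ to all of $x(\tau)$, $\tau<t$ (hence to $\sH_2(t)$ as well), which is what justifies $\bbE[e_i(t)\overline{x}_j(t)']=0$, the vanishing of the cross term $\bbE[e(t)'F^*(t)'R\,\overline{u}(t)]$, and the temporal independence of the composite noise driving the $(\hat{x}_1,\hat{\hat{x}}_2)$ recursion --- without which Proposition~\ref{lem:lall} cannot be invoked. As written, your ``delicate point'' is not a detail to be checked; it is the missing core of the argument.

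A second, smaller item you gloss over with the identification $\hat{x}_2(t)\leftrightarrow\tilde{x}_2(t)$: applying Proposition~\ref{lem:lall} to the virtual system $\overline{x}(t)=[\hat{x}_1(t)~\hat{\hat{x}}_2(t)]$ yields a gain acting on $\bbE[\hat{\hat{x}}_2(t)\mid \overline{x}_1(0:t),\overline{u}_1(0:t-1)]$, not on $\hat{x}_2(t)$ directly. The paper closes this by a tower-rule computation showing the two coincide; you need that step for the stated form of $\tilde{K}(t)$, with $\hat{x}_2(t)$ in the second slot, to actually follow from the proposition.
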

The proof is in the Appendix. Note that the classical separation principle does not hold as the statistics $\hat{x}(t)$ and $\doublehat{x}(t)$ cannot be used to compute the optimal law.

\subsubsection{Deriving the Optimal Gain Matrix}
\label{sec:optgain}
To characterize the optimal gain matrix, $F^*(t)$, the stochastic optimization problem is converted into a deterministic matrix optimization, and solved analytically.

\begin{theorem}\label{thm:optgainmatrix}
The matrix $F^*(t)$ is characterized by the linear equations,

\begin{align}
\Big[R + \tilde{B}'M(t+1)\tilde{B}\Big]_{11}F_{11}^*(t)\overline{T}_{11}(t) \nonumber \\
+ \Big[R + \tilde{B}'M(t+1)\tilde{B}\Big]_{12}F_{22}^*(t)\overline{T}_{21}(t) \nonumber\\
 = -\Big[\tilde{B}'M(t+1)V(t)'\Big]_{11} \\
\Big[R + \tilde{B}'M(t+1)\tilde{B}\Big]_{21}F_{11}^*(t)\overline{T}_{12}(t) \nonumber \\
+ \Big[R + \tilde{B}'M(t+1)\tilde{B}\Big]_{22}F_{22}^*(t)\overline{T}_{22}(t) \nonumber\\
 = -\Big[\tilde{B}'M(t+1)V(t)'\Big]_{22},
\label{eq:optgainmatrix}
\end{align}
where $[A]_{ij} = E_i'AE_j$, $E_i$ and $E_j$ being suitable partitions of the identity matrix $I = [E_i~E_j]$.

$\tilde{A} = \begin{bmatrix}
A_{11} & 0 & 0\\
A_{21} & A_{22} & 0\\
0 & 0 & A_{22}
\end{bmatrix}$, $G(t) = \begin{bmatrix}
BH(t)\\
[0~0~B_{22}J(t)]
\end{bmatrix}$, $\tilde{B} = \begin{bmatrix}
B_{11} & 0\\
B_{21} & B_{22}\\
0 & B_{22}
\end{bmatrix}$, $H(t) = \begin{bmatrix}
K_{11}(t) & K_{12}(t) & -K_{12}(t) \\
K_{21}(t) & K_{22}(t) & J(t) - K_{22}(t)
\end{bmatrix}$, $\tilde{Q} = \begin{bmatrix}
Q & 0\\
0 & 0
\end{bmatrix}$, $\tilde{S} = \begin{bmatrix}
S & 0\\
0 & 0
\end{bmatrix}$, $V(t) = \mathbf{E}[\overline{\phi}(t)n_1(t)']$, and $n_1(t) := \begin{bmatrix}
(\sR(t)\overline{\phi}_1(t))_1\\
B_{21}F_{11}(t)\overline{\phi}_1(t) + (\overline{\sR}(t)\overline{\phi}(t))_2\\
+ (\overline{\sR}(t)\overline{\phi}(t))_2 - (\sR(t)\overline{\phi}_1(t))_2
\end{bmatrix}$.
\end{theorem}
The proof is in the Appendix.

\vspace*{-0.15in}
\subsection*{Output-feedback case}
\label{sec:outputfeedback}
The approach extends to the output-feedback case where players' observations are corrupted by noise. The system dynamics are described by the same lower block triangular matrices as,
\begin{align}\label{eq:nestedsystemO}
\begin{bmatrix}
x_1(t+1)\\
x_2(t+1)
\end{bmatrix} &= \begin{bmatrix}
A_{11} & 0\\
A_{21} & A_{22}
\end{bmatrix} \begin{bmatrix}
x_1(t)\\
x_2(t)
\end{bmatrix} \nonumber\\
&+ \begin{bmatrix}
B_{11} & 0\\
B_{21} & B_{22}
\end{bmatrix} \begin{bmatrix} u_1(t)\\
u_2(t)
\end{bmatrix} + \begin{bmatrix}
v_1(t)\\
v_2(t)
\end{bmatrix},\nonumber\\
\begin{bmatrix}
y_1(t)\\
y_2(t)
\end{bmatrix} &= \begin{bmatrix}
C_{11} & 0\\
C_{21} & C_{22}
\end{bmatrix} \begin{bmatrix}
x_1(t)\\
x_2(t)
\end{bmatrix} + \begin{bmatrix}
w_1(t)\\
w_2(t)
\end{bmatrix}.
\end{align}
Here, $v(t) := \begin{bmatrix}v_1(t)\\
v_2(t)\end{bmatrix}$ and $w(t):= \begin{bmatrix}w_1(t)\\
w_2(t)\end{bmatrix}$ are zero-mean, Gaussian random vectors with covariance matrices $V$ and $W$ respectively that are independent across time and independent of initial system state $x(0)$ and of each other. $x(0)$ is zero-mean, Gaussian with known mean and covariance.

The information structure for the problem is,
\begin{align}\label{eq:infostructureO}
\sF_1(t) =\{&y_1(0:t),u_1(0:t-1)\}, \nonumber \\
\sF_2(t) =\{&y_1(0:t-1),u_1(0:t-1);\nonumber\\
&~~ y_2(0:t),u_2(0:t-1))\}.
\end{align}

Redefining the history of observations as, $\sH_i(t) = ( y_i(t-1),...,y_i(0),u_i(t-1),...,u_i(0))$ for block $i$, $i=1,2$, the information available to Player $1$ at time $t$ can also be expressed as $\sF_1(t) = \{ y_1(t),\sH_1(t)\}$, and that available to Player $2$ as $\sF_2(t) = \{ y_2(t),\sH_1(t),\sH_2(t)\}$. 

Given this information structure, the players' objective is to find the control law $u^*_i(t)$ as a function of $\sF_i(t)$ $(i=1,2)$ that minimizes the finite-time quadratic cost criterion~\eqref{eq:costfn}.  It can be seen from Proposition~\ref{lem:linearity} that the optimal control law for the formulated output-feedback problem is linear.

Further, defining estimators $\hat{x}(t) := \bbE[x(t)|\sH_1(t)]$ and $\doublehat{x}(t) := \bbE[x(t)|\sH_2(t)]$, the following result can be reconstructed from the state-feedback version.
\begin{lemma}\label{lem:estimatordynamicsO}
For the nested system~\eqref{eq:nestedsystemO}, the estimator dynamics for the redefined $\hat{x}(t)$ and $\doublehat{x}(t)$ are as follows.
\begin{equation}
\hat{x}(t+1) = A \hat{x}(t) +  B\begin{bmatrix} u_1(t)\\
\hat{u}_2(t)
\end{bmatrix} + \sR(t)\phi(t),
\label{eq:estdyn1O}
\end{equation}
\begin{equation}
\doublehat{x}(t+1) = A \doublehat{x}(t) +  B\begin{bmatrix} u_1(t)\\
u_2(t)
\end{bmatrix} + \overline{\sR}(t) \overline{\phi}(t)
\label{eq:estdyn2O}
\end{equation}
and $\hat{x}(0) = \doublehat{x}(0) = \bbE[x(0)]$, where
$\hat{u}_2(t) = \bbE[u_2(t) | \sH_1(t)]$. $\sR(t) = R_{xy_1}(t)R_{y_1}^{-1}(t)$, $\overline{\sR}(t) = \overline{R}_{xy}\overline{R}_{y}^{-1}$,and $\phi(t) = y_1(t) - C_{11}\hat{x}_1(t)$, $\overline{\phi}(t) = y(t) - C\doublehat{x}(t)$. $R_{xy_1} = AT(t)C_{11}'$, $R_{y_1} = C_{11}T_1(t)C_{11}' + W_{11}$, $\overline{R}_{xy} = A\overline{T}(t)C'$, $\overline{R}_{y} = C\overline{T}(t)C' + W$. $T(t) = \bbE[(x(t) - \hat{x}(t))(x_1(t) - \hat{x}_1(t))]$, $\overline{T}(t) = \bbE[(x(t) - \hat{x}(t))(x(t) - \hat{x}(t))']$.

Also, $\phi(t)$ and $\overline{\phi}(t)$ are zero-mean, uncorrelated Gaussian random vectors. $\bbE[\phi(t)\phi(t)'] = C_{11}T_1(t)C_{11}'+W_{11}$ and $\bbE[\overline{\phi}(t)\overline{\phi}(t)'] = C\overline{T}(t)C' + W$. Additionally, at each time step, $\phi(t)$ is uncorrelated with $\sH_1(t)$, and so $\bbE[\phi(t)y_1(\t)'] = \mathbf{0}$ for $\tau < t$. $\overline{\phi}(t)$ is uncorrelated with $\sH_1(t),~\sH_2(t)$, and so $\bbE[\overline{\phi}(t)y(\t)'] = \mathbf{0}$ for $\tau < t$.
\end{lemma}
\begin{proof}
Proof is similar to Lemma~\ref{lem:estimatordynamics} and is omitted.
\end{proof}

\begin{theorem}
For the nested system~\eqref{eq:nestedsystemO} with the information structure~\eqref{eq:infostructureO} and objective function~\eqref{eq:costfn}, the optimal decentralized control law is given by,
\begin{equation}
u(t)  =  F^*(t)\begin{bmatrix}
y_1(t) - \hat{x}_1(t)\\
y_2(t) - C_{21}\doublehat{x}_1(t) - C_{22}\doublehat{x}_2(t)
\end{bmatrix} + \tilde{K}(t),
\label{eq:optcontroller}
\end{equation}
where $F^*(t) = \begin{bmatrix}
F^*_{11}(t) &  0\\
0 & F^*_{22}(t)
\end{bmatrix}$ is the optimal gain matrix, and,
\begin{align*}
\tilde{K}(t) = &\begin{bmatrix}
K_{11}(t) & K_{12}(t) \\
K_{21}(t) & K_{22}(t)
\end{bmatrix} \begin{bmatrix}
\hat{x}_1(t)\\
\hat{x}_2(t)\\
\end{bmatrix} \\
& + \begin{bmatrix}
0 \\
\begin{bmatrix}
\hat{K}_{21}(t) & J(t)
\end{bmatrix} \begin{bmatrix}
\doublehat{x}(t) - \hat{x}(t)
\end{bmatrix}
\end{bmatrix}
,
\end{align*}
where $K(t)$ and $J(t)$ are as in Proposition \ref{lem:lall}, $\hat{K}_{21}$ is the optimal control gain for the second player's input given the optimal centralized controller only based on common information. ~\cite[Theorem 12]{LeNa13}.
\label{thm:optcontrollerformO}
\end{theorem}
The proof is in the appendix.

\vspace*{-0.05in}
\begin{remark}
Observe that the classical separation principle does not hold in the $(1,\infty)$-delayed sharing pattern~\cite{KuVa86}, i.e., $u_2(t)$ cannot be computed using just $\mathbf{E}[x(t) | \sF_2(t)]$.
\end{remark}

\begin{theorem}\label{thm:optgainmatrixO}

The optimal gain matrix $F^*(t)$ is given by the recursion,
\begin{align}
F^*(t) &= -\Big(R + \tilde{B}'M(t+1)\tilde{B}\Big)^{-1}\times\nonumber\\
&~~~~\Big(\Psi^*(t) + \tilde{B}'M(t+1)V(t)'\Big)\times\nonumber\\
&~~~~\Big(C\overline{T}(t)C' + W\Big)^{-1},\nonumber\\
M^*(t) &= \tilde{Q} + H(t)'RH(t)\nonumber\\
&~~~~ + (\tilde{A} + G(t))'M^*(t+1)(\tilde{A} + G(t)),\nonumber\\
M^*(N) &= \tilde{S},
\label{eq:optgainmatrix}
\end{align}
where,
$\tilde{A} = \begin{bmatrix}
A & 0 \\
0 & A
\end{bmatrix}$, $G(t) = \begin{bmatrix}
BK(t)&0\\
0&B\begin{bmatrix}
0\\
\hat{K}_{21}(t)&J(t)
\end{bmatrix}
\end{bmatrix}$, $\tilde{B} = \begin{bmatrix}
0\\
B\end{bmatrix}$,

$H(t) = \begin{bmatrix}
K & \begin{bmatrix}
0\\
\hat{K}_{21}(t) & J(t)
\end{bmatrix}
\end{bmatrix}$, $\tilde{Q} = \begin{bmatrix}
Q & 0\\
0 & 0
\end{bmatrix}$, $\tilde{S} = \begin{bmatrix}
S & 0\\
0 & 0
\end{bmatrix}$,

$V(t) = \mathbf{E}[{\phi_o}(t)n_1(t)']$, and $n_1(t) := \begin{bmatrix}
\sR(t)\phi(t)\\
\overline{\sR}(t)\overline{\phi}(t) - \sR(t)\phi(t)
\end{bmatrix}$.

\begin{remark}
As in Theorem 2, it is not difficult to obtain complete linear expressions for $F^*(t)$ from it using the sparsity constraints on $\Psi^*(t)$ and $F^*(t)$.
\end{remark}
\end{theorem}
\vspace*{-0.25in}
\section{The $(1,0)$ information sharing pattern}\label{sec:problem10}
\vspace*{-0.05in}
Another application of the above approach, which is also an extension to~\cite{KuSi74}, is the $(1,0)$-pattern. A two-player discrete linear time-invariant system with output feedback is considered. The system dynamics are,
\begin{align}\label{eq:system10}
x(t+1) &= A x(t) + B u(t) + v(t), \nonumber \\
y(t) &= Cx(t) + w(t)
\end{align}

The two players have a (team/common) objective to find the decentralized control law $(u^*_1(\cdot),u^*_2(\cdot))$ that minimizes a finite-time quadratic cost criterion
\begin{equation}\label{eq:costfn10}
\bbE[\sum_{t=0}^{N-1}(x(t)'Qx(t) + u(t)'Ru(t)) + x(N)'Sx(N)],
\end{equation}

We consider the following information structure:

\begin{align}\label{eq:infostructure10}
\sF_1(t) &= \{y_1(0:t),u_1(0:t-1),y_2(0:t),u_2(0:t-1)\}, \nonumber \\
\sF_2(t) &= \{y_1(0:t-1),u_1(0:t-1),\nonumber\\
&~~~~~y_2(0:t),u_2(0:t-1)\}.
\end{align}
where $y(0:t)$ denotes the vector $(y(0),\cdots,y(t))$.

We refer to this as the $(1,0)$ information sharing pattern. Finding the optimal decentralized controller under this information pattern is similar to that of the $(1,1)$ pattern~\cite{KuSi74}, with the difference being the asymmetry in observation history. We outline how the same approach that we used for the $(1, \infty)$ scenario can be applied also to this case.


\subsubsection{Linearity of the optimal control law}\label{sec:linearity}



\begin{proposition}\label{lem:linearity10}
The LQG problem~\eqref{eq:system10}-\eqref{eq:costfn10} with the $(1,0)$-delayed sharing information pattern~\eqref{eq:infostructure10} has a partially nested structure, and a linear optimal control law exists.
\end{proposition}
See, for instance, \cite[Theorem 2]{HoCh72} for a proof.
%

Denote by $\sH(t) = ( y(0:t-1), u(0:t-1))$, the common information at time $t$~\cite{Wi71}. Using the linearity of the optimal control law, it follows that the optimal control law can be written as $u(t)=[u_1(t),~u_2(t)]'=$
\begin{equation}\label{eq:optlinlaw10}
\begin{bmatrix}
F^*_{11}(t) & F^*_{12}(t)\\
0 & F^*_{22}(t)
\end{bmatrix}
\begin{bmatrix}
y_1(t)\\
y_2(t)
\end{bmatrix}
+
\begin{bmatrix}
L_1(\sH(t)) \\
L_2(\sH(t))
\end{bmatrix},
\end{equation}
where $F^*$ is the optimal gain matrix and $L_i(\cdot)$ are linear, possibly time-varying, functions.

\subsubsection{Derivation of the Optimal Control Law}\label{sec:optimality}
Define the estimator $\hat{x}(t) := \bbE[x(t)|\sH(t)]$.

\begin{lemma}\label{lem:estimatordynamics10}
\cite[Section 6.5]{KwSi72} For the nested system~\eqref{eq:system10}, the estimator dynamics for $\hat{x}(t)$ are as follows.
\begin{equation}
\hat{x}(t+1) = A \hat{x}(t) +  B u(t) + \sK(t)\phi(t),
\label{eq:estdynO10}
\end{equation}
and $\hat{x}(0) = \bbE[x(0)]$, where, $\sK(t) = K_{xy}(t)K_{y}(t)^{-1}$ and $\phi(t) = y(t) - C\hat{x}(t)$. $K_{xy}(t) = AT(t)C'$, $K_{y}(t) = CT(t)C' + W$. $T(t) = \bbE[(x(t) - \hat{x}(t))(x(t) - \hat{x}(t))]$. $\phi(t)$ is zero-mean, uncorrelated Gaussian and $\bbE[\phi(t)\phi(t)'] = CT(t)C' + W$. $\bbE[\phi(t)y(\t)'] = 0$, $\tau < t$.
\end{lemma}

\begin{theorem}
For the system~\eqref{eq:system10} with the information structure~\eqref{eq:infostructure10} and objective function~\eqref{eq:costfn10},
\begin{equation}
\begin{bmatrix}
u_1^*(t)\\
u_2^*(t)
\end{bmatrix} = F^*(t) \begin{bmatrix}
y_1(t)\\
y_2(t)
\end{bmatrix} + G^*(t)\hat{x}(t),
\label{eq:optcontroller10}
\end{equation}
where $F^*(t) = \begin{bmatrix}
F^*_{11}(t) &  F_{12}^*(t)\\
\mathbf{0} & F^*_{22}(t)
\end{bmatrix}$ is the optimal gain and $G^*(t) = - (F^*(t)C + H^*(t))$ where $H^*(t)$ is the gain for classical LQR with dynamics described by~\eqref{eq:system10}.
\label{thm:optcontrollerformO10}
\end{theorem}
\vspace*{-0.1in}
\label{sec:optgain}
\begin{proposition}
The optimal gain matrix $F^*(t)$ is given by the recursion,

\begin{align}
&F^*(t) = -\Big(R + B'M(t+1)B\Big)^{-1}\times\nonumber\\
&~~~~~\Big(\Psi^*(t)(CT(t)C' + W(t))^{-1} + B'M(t+1)\sR(t)\Big),\nonumber\\
&M^*(t) = Q + H(t)'RH(t) \nonumber\\
&~~~~+ (A - BH(t))'M^*(t+1)(A - BH(t)),\nonumber\\
&M^*(N) = \tilde{S}.
\label{eq:optgainmatrix10}
\end{align}
\end{proposition}
\vspace*{-0.1in}

From Theorems 4 and 5, we see that the form of the optimal controller for the $(1,\infty)$ case is a mixture of those for the $(1,0)$ and the $(0,\infty)$ case. Specifically, the first component that includes the innovation sequence resembles the $(1,0)$ setting, and the second component resembles the $(0,\infty)$ solution. In fact, it can be seen from Theorem 4 that removing the innovation sequence term (as is the case when there are no delays) results in the same solution as the $(0,\infty)$ case.

\vspace*{-0.1in}
\section{Future Work}\label{sec:conclusions}
\vspace*{-0.05in}


We believe that the approach of combining linearity and summary statistics in the manner of this paper can be used to compute optimal control laws for more general asymmetric information sharing patterns. Specifically, the $n$-player network extension to the output feedback scenario would be a novel extension to~\cite{LaLe14}.
\vspace*{-0.1in}

\section{Acknowledgments}\label{sec:acknowledgments}
We gratefully acknowledge the contributions of the editor and our reviewers to this paper, in particular to the reviewer who pointed out an approach to use sparsity patterns to solve the gain matrix equations.
\vspace*{-0.14in}
\appendices

\section{Proofs of Theorems and Lemmas}
\vspace*{-0.05in}
\begin{proof}[Proof of lemma~\ref{lem:estimatordynamics}]
Let us first derive the dynamics of block $1$. Denote $\phi(t) := x_1(t) - \hat{x}_1(t)$. Then, $\phi(t)$ and $\sH_1(t)$ are independent by the projection theorem for Gaussian random variables \cite[Section 6.5]{KwSi72} and,
\begin{align*}
\hat{x}(t+1) &= \bbE[x(t+1) | \sH_1(t+1)]\\
&= \bbE[x(t+1) | \sH_1(t), x_1(t), u_1(t)]\\
&= \bbE[x(t+1) | \sH_1(t),u_1(t)] + \bbE[x(t+1) | \phi(t)]\nonumber\\
&~~ - \bbE[x(t+1)],
\end{align*}
\vspace*{-0.05in}
where the last equality follows from the independence of $\phi(t)$ and $\sH_1(t)$~\cite[Section 6.5]{KwSi72}.
The first term on the RHS above, $\bbE[x(t+1) | \sH_1(t),u_1(t)]$ 
\begin{align*}
&= \bbE[Ax(t) + Bu(t) | \sH_1(t),u_1(t)]\\
&= A\hat{x}(t) + B\begin{bmatrix} u_1(t)\\
\hat{u}_2(t)
\end{bmatrix}, 
\end{align*}
where $\hat{u}_2(t) = \bbE[u_2(t) | \sH_1(t)]$.

The second and third terms can be related through the conditional estimation of Gaussian random vectors as,
\begin{align*}
\bbE[x(t+1) | \phi(t)] - \bbE[x(t+1)] = R_{xx_1}R_{x_1}^{-1}\phi(t),
\end{align*}
where defining  $T(t) = \bbE[(x(t) - \hat{x}(t))(x_1(t) - \hat{x}_1(t))]$ and partitioning $T(t)$ as $T(t) = \big[T_1(t) | T_2(t)\big]$, we have $R_{xx_1} = AT(t)$, $R_{x_1} = T_1(t)$ ~\cite[Section 6.5]{KwSi72}.

Putting these three terms together, we have
\begin{equation*}
\hat{x}(t+1) = A \hat{x}(t) +  B\begin{bmatrix} u_1(t)\\
\hat{u}_2(t)
\end{bmatrix} + \sR(t)\phi(t).
\end{equation*}
Similarly, by defining $\overline{\phi}(t) = x(t) - \doublehat{x}(t)$ and proceeding as above, we have
\begin{equation*}
\doublehat{x}(t+1) = A \doublehat{x}(t) +  B\begin{bmatrix} u_1(t)\\
u_2(t)
\end{bmatrix} + \overline{\sR}(t) \overline{\phi}(t).
\end{equation*}

To derive the properties of $\phi(t)$ and $\overline{\phi}(t)$, note that the mean and variance follow immediately by observing that $\bbE[\hat{x}_1(t)] = \bbE[x_1(t)]$ and $\bbE[\doublehat{x}(t)]  = \bbE[x(t)]$.

The projection theorem for Gaussian RVs again implies that $\phi(t)$ and $x_1(\t)$ are uncorrelated for $\tau < t$, and that $\overline{\phi}(t)$ is uncorrelated with $x(\t)$, $\tau < t$.
\end{proof}


\vspace*{-0.15in}
\begin{proof}[Proof of Theorem~\ref{thm:optcontrollerform}]
We begin by outlining our proof technique. The approach, while superficially similar to~\cite{KuSi74} is more challenging due to the asymmetry in the observation history. The optimal control law for the $(1,\infty)$ state-feedback pattern, already shown to be linear since the problem is partially nested, is proven to be the same as the optimal control law for the dynamical system~\eqref{eq:nestedsystem} with the objective function $\arg\min_{\overline{u}(t)}\bbE[\sum_{t=0}^{N-1}(\overline{x}(t)'Q\overline{x}(t) + \overline{u}(t)'R\overline{u}(t)) + \overline{x}(N)'S\overline{x}(N)]$, where $\overline{x}(t) = [\hat{x}_1(t)~\doublehat{x}_2(t)]$ and $\overline{u}(t)$ is an input variable that is an invertible transformation of the original input $u(t)$. The crucial point behind this transformation of the objective function and the input variable is that $\overline{u}(t)$ will be shown to be a function of just the summary statistics $\hat{x}(t)$ and $\doublehat{x}(t)$, hence allowing efficient computation of the control law.

Since the new objective depends on $\overline{x}(t)$ and $\overline{u}(t)$, the dynamics of $x(t)$ can be rewritten in terms of these. It can then be shown that the modified problem is of the form as in Proposition~\ref{lem:lall}. Thus, the optimal control law can be obtained for the modified problem, and inverted to get the optimal law for the $(1,\infty)$ state-feedback pattern.

As a preliminary result, the following lemma derives a required uncorrelatedness property and also proves the equivalence of two of the estimator variables.
\vspace*{-0.05in}
\begin{lemma}\label{lem:equivestimators}
For the nested system~\eqref{eq:nestedsystem}, $\hat{x}_1(t) = \doublehat{x}_1(t)$. Additionally, $\phi(t) = \overline{\phi}_1(t)$, which implies from Lemma~\ref{lem:estimatordynamics} that $\phi(t)$ is uncorrelated with $\sH_2(t)$, and so $\bbE[\phi(t)x(\tau)'] = \mathbf{0}, \tau < t$.
\end{lemma}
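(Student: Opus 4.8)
The plan is to exploit the nested (block lower-triangular) structure of~\eqref{eq:nestedsystem}, under which the block-1 state evolves as an autonomous subsystem. Writing~\eqref{eq:nestedsystem} componentwise gives $x_1(t+1) = A_{11}x_1(t) + B_{11}u_1(t) + v_1(t)$, so $x_1(t)$ is a deterministic function of $x_1(0)$, the inputs $u_1(0:t-1)$, and the noises $v_1(0:t-1)$ only; neither $x_2$ nor $u_2$ ever enters (this is exactly where $A_{12}=B_{12}=0$ is used). I would use this to evaluate $\hat{x}_1(t)$ and $\hat{\hat{x}}_1(t)$ explicitly and show they coincide.

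First I would compute $\hat{x}_1(t) = \bbE[x_1(t)\mid\sH_1(t)]$. Since $\sH_1(t) = \{x_1(0:t-1),u_1(0:t-1)\}$ contains both $x_1(t-1)$ and $u_1(t-1)$ exactly, the only term in $x_1(t) = A_{11}x_1(t-1) + B_{11}u_1(t-1) + v_1(t-1)$ not determined by $\sH_1(t)$ is the fresh noise $v_1(t-1)$. A short causality induction shows that, under the fixed control law (each $u_1(s)$ being $\sF_1(s)$-measurable, and $\sF_1(s)$ depending only on block-1 quantities), $\sH_1(t)$ is a measurable function of $x_1(0)$ and $v_1(0:t-2)$ alone, so $v_1(t-1)$ is independent of $\sH_1(t)$ and $\bbE[v_1(t-1)\mid\sH_1(t)]=0$. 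Hence $\hat{x}_1(t) = A_{11}x_1(t-1) + B_{11}u_1(t-1)$.

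Next I would repeat the computation for $\hat{\hat{x}}_1(t) = \bbE[x_1(t)\mid\sH_1(t),\sH_2(t)]$. The enlarged conditioning set still pins down $x_1(t-1)$ and $u_1(t-1)$, so it again suffices to verify $\bbE[v_1(t-1)\mid\sH_1(t),\sH_2(t)] = 0$. By the same bookkeeping, $\sH_2(t)=\{x_2(0:t-1),u_2(0:t-1)\}$ is a function of $x(0)$ and $v(0:t-2)$, since the block-2 states and inputs up to time $t-1$ depend only on noise realized strictly before $t-1$; thus the full conditioning information is independent of $v_1(t-1)$. Therefore $\hat{\hat{x}}_1(t) = A_{11}x_1(t-1) + B_{11}u_1(t-1) = \hat{x}_1(t)$, establishing the first claim. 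The equality $\phi(t) = \overline{\phi}_1(t)$ is then immediate from $\phi(t)=x_1(t)-\hat{x}_1(t)$ and $\overline{\phi}_1(t) = x_1(t)-\hat{\hat{x}}_1(t)$, and the uncorrelatedness $\bbE[\phi(t)x(\tau)']=\mathbf{0}$ for $\tau<t$ follows by reading off the first block-row of the identity $\bbE[\overline{\phi}(t)x(\tau)']=\mathbf{0}$ already proved in Lemma~\ref{lem:estimatordynamics}.

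The step I expect to be the main obstacle is the causality bookkeeping: one must verify carefully that conditioning on the extra block-2 data in $\sH_2(t)$ adds nothing about $x_1(t)$, i.e. that $v_1(t-1)$ remains independent of the enlarged information set. This hinges jointly on the absence of the $A_{12},B_{12}$ couplings (so block-2 variables never feed back into $x_1$) and on the strict one-step lag that keeps every conditioning variable a function of noise only up to time $t-2$. Once this independence is pinned down, the remainder is a one-line substitution.
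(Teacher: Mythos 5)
Your proposal is correct and follows essentially the same route as the paper: both evaluate $\hat{\hat{x}}_1(t)=\bbE[x_1(t)\mid \sH_1(t),\sH_2(t)]$ explicitly as $A_{11}x_1(t-1)+B_{11}u_1(t-1)$ using the independence of the fresh noise $v_1(t-1)$ from the conditioning histories, and then identify this with $\hat{x}_1(t)$. The only difference is that you spell out the causality induction showing $\sH_1(t)$ and $\sH_2(t)$ are functions of $x(0)$ and $v(0:t-2)$ alone (hence independent of $v_1(t-1)$), a step the paper's proof asserts without elaboration.
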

\begin{proof}
Intuitively, the claim is true because Player $2$ has no additional information about block $1$'s dynamics as compared to Player $1$. Formally,
\begin{align*}
\doublehat{x}_1(t) 
&= \bbE[x_1(t) | \sH_1(t), \sH_2(t)],\\
&= A_{11}x_1(t-1) + B_{11}u_1(t-1)= \hat{x}_1(t), 
\end{align*}
where we have used the independence of the zero-mean noise $v_1(t)$ from both $\sH_1(t)$ and $\sH_2(t)$, and the definitions of  $\phi(t)$ and $\overline{\phi}(t)$.
\end{proof}

Note that the estimator dynamics of $[\hat{x}_1(t),~\doublehat{x}_2(t)]$ can be written in the following way,
\begin{align}
\begin{bmatrix}
\hat{x}_1(t+1)\\
\doublehat{x}_2(t+1)
\end{bmatrix} =& \begin{bmatrix}
A_{11} & 0\\
A_{21} & A_{22}
\end{bmatrix} \begin{bmatrix}
\hat{x}_1(t)\\
\doublehat{x}_2(t)
\end{bmatrix}\nonumber\\
 +& \begin{bmatrix}
B_{11} & 0\\
B_{21} & B_{22}
\end{bmatrix} \begin{bmatrix}
u_1(t)\\
u_2(t)
\end{bmatrix} \nonumber\\
+& \begin{bmatrix}
(\sR(t)\phi(t))_1\\
(\overline{\sR}(t)\overline{\phi}(t))_2
\end{bmatrix}.
\label{eq:estimatordynamics}
\end{align}

This shall be denoted in shorthand notation as,
\begin{equation*}
\overline{x}(t+1) = A \overline{x}(t) + Bu(t) +
 \begin{bmatrix}
(\sR(t)\phi(t))_1\\
(\overline{\sR}(t)\overline{\phi}(t))_2
\end{bmatrix}.
\end{equation*}

Let us denote estimation error by $e(t) = [e_1(t)~e_2(t)] := [x_1(t) - \hat{x}_1(t)~x_2(t) - \doublehat{x}_2(t)]$ respectively. Then, $e_1(t) = \phi_1(t)$ and  $e_2(t) = \overline{\phi}_2(t)$. By the projection theorem and some manipulations, it can be verified that $\bbE[e_i(t)\overline{x}_j(t)'] = 0$ for $i,j = 1,2$.

Further, we define a transformed system input $\overline{u}(t)$ by
\begin{equation}
\overline{u}(t) = \begin{bmatrix}
\overline{u}_1(t)\\
\overline{u}_2(t)
\end{bmatrix} := u(t) -
F^*(t)e(t),
\label{eq:modifiedinput}
\end{equation}
where $F^*(t)$ is the optimal gain matrix in~\eqref{eq:optlinlaw}. Note that the transformation is invertible. From~\eqref{eq:optlinlaw}, it can  be observed that two components of $\overline{u}(t)$ are linear functions of $\sH_1(t)$ and $(\sH_1(t), \sH_2(t))$ respectively, since,
\begin{align}
\overline{u}(t) &= u(t) - F^*(t)\begin{bmatrix}
x_1(t) - \hat{x}_1(t)\nonumber\\
x_2(t) - \doublehat{x}_2(t)
\end{bmatrix}\\
&= \begin{bmatrix}
F^*_{11}(t)\hat{x}_1(t) + L_1(\sH_1(t)) \\
F^*_{22}(t)\doublehat{x}_2(t) + L_2(\sH_1(t),\sH_2(t))
\end{bmatrix}\nonumber\\
&= \begin{bmatrix}
L'_1(\sH_1(t)) \\
L'_2(\sH_1(t),\sH_2(t))
\end{bmatrix},
\end{align}
since $\hat{x}(t)$ and $\doublehat{x}(t)$ are linear functions of $\sH_1(t)$ and $(\sH_1(t), \sH_2(t))$, respectively.

Using transformed input $\overline{u}(t)$,~\eqref{eq:estimatordynamics} can be rewritten as
\begin{equation}\label{eq:estimatordynamicsA}
\overline{x}(t+1) = A \overline{x}(t) + B \overline{u}(t) + \tilde{v}(t),
\end{equation}
where $\tilde{v}(t)$ is equal to
\begin{equation*}
\begin{bmatrix}
B_{11}F_{11}(t)\phi(t) + (\sR(t)\phi(t))_1\\
B_{21}F_{11}(t)\phi(t) + B_{22}F_{22}(t)\overline{\phi}_2(t) + (\overline{\sR}(t)\overline{\phi}(t))_2
\end{bmatrix}.
\end{equation*}

It can be verified from Lemmas~\ref{lem:estimatordynamics} and~\ref{lem:equivestimators} that $\tilde{v}(t)$ is Gaussian, zero-mean and independent across time. With the estimator system well-characterized, we now show that the original objective function~\eqref{eq:costfn} is equivalently expressed in terms of $\overline{x}(t)$ and $\overline{u}(t)$.

Rewriting~\eqref{eq:costfn} in terms of the new state variable $\overline{x}(t)$ and the transformed input $\overline{u}(t)$, after noting that the terms involving $e_1(t)$ and $e_2(t)$ are either zero or independent of the input $\overline{u}(t)$, we have,
\vspace*{-0.1in}
\begin{align}
\label{eq:costfnAo}
&\min_{u(t), 0\leq t\leq N-1} \bbE[\sum_{t=0}^{N-1}(x(t)'Qx(t) + u(t)'Ru(t))\nonumber\\
&\hspace{0.5in} + x(N)'Sx(N)]\nonumber\\
= &\min_{\overline{u}(t), 0\leq t\leq N-1} \bbE[\sum_{t=0}^{N-1}(\overline{x}(t)'Q\overline{x}(t) + \overline{u}(t)'R\overline{u}(t)'\nonumber\\
&\hspace{0.5in} +e(t)' F^*(t)RF^*(t)e(t) \\
&\hspace{0.5in} + 2 e(t)'F^*(t)R\overline{u}(t)) +\overline{x}(N)'S\overline{x}(N)].\nonumber
\end{align}

\vspace*{-0.1in}
The term $e(t)' F^*(t)RF^*(t)e(t)$ is an estimation error and is independent of the control input $\overline{u}(t)$. This can be seen from the fact that $e_1(t) = \phi_1(t)$ and $e_2(t) = \overline{\phi}_2(t)$ are independent of $\sH_1(t)$ and $\sH_2(t)$, by Lemmas~\ref{lem:estimatordynamics} and~\ref{lem:equivestimators}. $\overline{u}(t)$ is a linear function of $\sH_1(t)$ and $\sH_2(t)$ and so is independent of $e(t)$. In a sense, we have ``subtracted'' out the dependence on $e(t)$ in~\eqref{eq:modifiedinput}.

$\overline{u}_1(t)$ and $\overline{u}_2(t)$ are linear functions of $\sH_1(t)$ and $(\sH_1(t),\sH_2(t))$ respectively. So the fourth term has zero expected value since $\bbE[\phi(t)x(\tau)'] = \mathbf{0} = \bbE[\overline{\phi}(t)x(\tau)'], \tau < t$, from Lemmas~\ref{lem:estimatordynamics} and~\ref{lem:equivestimators}. Thus, the following is an equivalent function,
\begin{align}
\min_{\overline{u}(t)} \bbE[\sum_{t=0}^{N-1}(\overline{x}(t)'Q\overline{x}(t) + \overline{u}(t)'R\overline{u}(t)) + \overline{x}(N)'S\overline{x}(N)].
\label{eq:costfnA}
\end{align}

\vspace*{-0.1in}
Observe that, with this transformation, solving the $(1,\infty)$ problem is equivalent to solving the cost ~\eqref{eq:costfnA} for the system~\eqref{eq:estimatordynamicsA} with no communication delay ($(0,\infty)$-delayed sharing pattern) in the new state variables. A similar observation has been remarked in~\cite[IV]{LaDo13}.

Letting $[\overline{u}_1(t)~\overline{u}_2(t)]'$ take up the role of players' inputs,~\eqref{eq:estimatordynamicsA} has a nested system structure with no communication delays. By Proposition~\ref{lem:lall}, the optimal controller for the system~\eqref{eq:estimatordynamicsA}-\eqref{eq:costfnA} is given by,
\begin{equation}\label{eq:optmodified}
\begin{bmatrix}
\overline{u}_1^*(t)\\
\overline{u}_2^*(t)
\end{bmatrix} = \begin{bmatrix}
K_{11}(t) & K_{12}(t) & \mathbf{0} \\
K_{21}(t) & K_{22}(t) & J(t)
\end{bmatrix} \begin{bmatrix}
\hat{x}_1(t)\\
\hat{x}_2(t)\\
\hat{\overline{x}}_2(t) - \hat{x}_2(t)
\end{bmatrix},
\end{equation}
where $\hat{\overline{x}}_2(t) = \bbE[\doublehat{x}_2(t) | \overline{x}_1(0:t), \overline{u}_1(0:t-1)]$. As shown below, $\hat{\overline{x}}_2(t)$ is just $\hat{x}_2(t)$.
\vspace*{-0.05in}
\begin{align*}
\hat{x}_2(t) &= \bbE[x_2(t) | \sH_1(t)],\\
&= \bbE\big[\bbE[x_2(t) | \sH_1(t) \big] \big| \overline{x}_1(0:t), \overline{u}_1(0:t-1)],\\
&= \bbE[x_2(t) | \overline{x}_1(0:t), \overline{u}_1(0:t-1)]],\\
&= \bbE\big[\bbE[x_2(t) | \sH_1(t),\sH_2(t)\big] \big| \overline{x}_1(0:t), \overline{u}_1(0:t-1)],\\
&= \hat{\overline{x}}_2(t),
\end{align*}
\vspace*{-0.025in}
where, in the second equality, the conditional expectation is taken w.r.t. itself and other random variables. The $3^{rd}$ and $4^{th}$ equalities follow from the tower rule.

Recall that $\overline{u}(t)$ was allowed to be any linear function of the histories $\sH_1(t)$ and $\sH_2(t)$. However, $\overline{u}(t)$ finally depends on just the summary statistics $\hat{x}(t)$ and $\doublehat{x}(t)$, thereby proving that these information statistics are indeed sufficient to compute the optimal law.

From~\eqref{eq:optmodified} and~\eqref{eq:modifiedinput}, $u(t)$ is obtained as,
\begin{equation}\label{eq:optcontroller}
u(t)  =  F^*(t)\begin{bmatrix}
x_1(t) - \hat{x}_1(t)\\
x_2(t) - \doublehat{x}_2(t)
\end{bmatrix} + \tilde{K}(t).
\end{equation}
\end{proof}
\vspace*{-0.35in}
\begin{proof}[Proof of Theorem~\ref{thm:optgainmatrix}]
Assuming that optimal gain matrix $F^*(t)$ is unknown, we replace it by arbitrary $F(t)$ and solve for it by deterministic matrix optimization.

From Theorem~\ref{thm:optcontrollerform}, using the modified input $\overline{u}(t) := u(t) - \begin{bmatrix}
F_{11}(t) \phi(t)\\
F_{22}(t) \overline{\phi}_2(t)
\end{bmatrix}$, we have,
\begin{equation}
\label{eq:shortformlall}
\overline{u}(t) = H(t) \tilde{x}(t),
\end{equation}
where $H(t) = \begin{bmatrix}
K_{11}(t) & K_{12}(t) & -K_{12}(t) \\
K_{21}(t) & K_{22}(t) & J(t) - K_{22}(t)
\end{bmatrix}$, and $\tilde{x}(t) := \begin{bmatrix}
\hat{x}_1(t)\\
\doublehat{x}_2(t)\\
\doublehat{x}_2(t) - \hat{x}_2(t)
\end{bmatrix}$.

The dynamics of $\tilde{x}(t)$ are obtained from~\eqref{eq:estdyn2},~\eqref{eq:estimatordynamicsA} as,
\begin{align}
\label{eq:dynamicsFopt}
\tilde{x}(t+1) = \big(\tilde{A} + G(t)\big)\tilde{x}(t) + n(t),
\end{align}
where $\tilde{A} := \begin{bmatrix}
A & \mathbf{0}\\
\mathbf{0} & A_{22}
\end{bmatrix}$, $G(t) := \begin{bmatrix}
&BH(t)&\\
0&0&B_{22}J(t)
\end{bmatrix}$, and $n(t) := \begin{bmatrix}
\tilde{v}(t)\\
\tilde{v}_2(t) - (\sR(t)\phi(t))_2
\end{bmatrix}$.

Let $\sigma_{F(t)}$ denote the variance of $n(t)$.
Since $F(t)$ minimizes~\eqref{eq:costfnAo}, keeping only terms that depend on $F(t)$,
\begin{align}\label{eq:costfnFoptA}
J_F = \min_{F(t)}\bbE\bigg[\sum_{t=0}^{N-1}\Big(\tilde{x}(t)'\tilde{Q}\tilde{x}(t) + \tilde{x}(t)' H(t)' R H(t) \tilde{x}(t) \nonumber \\
+\overline{\phi}(t)' F(t)'RF(t)\overline{\phi}(t)\Big) + \tilde{x}(N)'\tilde{S}\tilde{x}(N)\bigg],\nonumber\\
\end{align}
where $\tilde{Q} = \begin{bmatrix}
Q & 0\\
0 & 0
\end{bmatrix}$ and $\tilde{S} = \begin{bmatrix}
S & 0\\
0 & 0
\end{bmatrix}$.
Using matrix algebra,~\ref{eq:costfnFoptA} simplifies to,
\begin{align}
J_F= \min_{F(t)}\big[\sum_{t=0}^{N-1}\text{trace}\Big(\big(\tilde{Q} + H(t)' R H(t)\big)\Sigma(t)\Big)\nonumber\\
+\text{trace}\Big(\tilde{S}\Sigma(N)\Big)\nonumber + \sum_{t=0}^{N-1}\text{trace}\Big(F(t)'RF(t)\overline{T}(t)\Big)\big],
\end{align}
where $\Sigma(t) := \bbE[\overline{x}(t)\overline{x}(t)']$, with dynamics from~\eqref{eq:dynamicsFopt},

\vspace*{-0.25in}
\begin{align}
\Sigma(t+1) &= \big(\tilde{A} + G(t)\big)\Sigma(t)\big(\tilde{A} + G(t)\big)' + \sigma_{F(t)},\nonumber\\
\Sigma(0) &= \bbE[\overline{x}(0)\overline{x}(0)'] = \mathbf{0}.
\label{eq:dynamicsFoptA}
\end{align}
where $\sigma_{F(t)}$ is the covariance of $n(t)$. $F^*(t)$ may now be obtained using the discrete matrix minimum principle~\cite{KlAt66} through the Hamiltonian to minimize~\eqref{eq:costfnFoptA}.
\begin{align}
\sH &= \text{trace}[(\tilde{Q} + H(t)'RH(t))\Sigma (t)] \nonumber\\
&+ \text{trace}[F(t)'RF(t)\overline{T}(t)] \nonumber\\
&+\text{trace}[(\tilde{A} + G(t))\Sigma (t)(\tilde{A} + G(t))'M(t+1)'] \nonumber\\
&+\text{trace}[\sigma_{F(t)}M(t+1)'] + \text{trace}[2F(t)\Psi(t)']
\end{align}
where $\Psi(t) = \begin{bmatrix}
0 & \Psi_{12}(t)\\
\Psi_{21}(t) & 0
\end{bmatrix}$ is a suitably partitioned Lagrange multiplier matrix and $M(t)$ is the costate matrix. The optimality criteria are,
\begin{align*}
\frac{\partial \sH}{\partial F(t)} = 0, &~\frac{\partial \sH}{\partial \Psi(t)} = 0,~
\frac{\partial \sH}{\partial M(t)} = \Sigma^*(t),~\Sigma^*(0) = \Sigma(0)\\
\frac{\partial \sH}{\partial \Sigma(t)} &= M^*(t),~M^*(N) = \frac{\partial \text{trace}[S\Sigma(N)]}{\partial \Sigma(N)}
\end{align*}

Solving the above equations, we get,
\begin{align}
&2RF^*(t)\overline{T}(t) + \frac{\partial \text{trace}[\sigma_{F(t)}M(t+1)']}{\partial F(t)} + 2 \Psi^*(t) = 0\nonumber\\
&F^*_{12}(t) = F^*_{21}(t) = 0\nonumber\\
&\Sigma^*(t+1) = (\tilde{A} + G(t))\Sigma (t)(\tilde{A} + G(t))' + \sigma_{F(t)},\nonumber\\
&\Sigma^*(0) = 0\nonumber\\
&M^*(t) = \tilde{Q} + H(t)'RH(t) \nonumber\\
&\hspace{0.1in}+ (\tilde{A} + G(t))'M^*(t+1)(\tilde{A} + G(t)), M^*(N) = S.
\label{eq:optcondns}
\end{align}
The noise term in ~\eqref{eq:dynamicsFopt} can be rewritten as, $n(t)$=
 
$\begin{bmatrix}
B_{11}F_{11}(t)\overline{\phi}_1(t) + (\sR(t)\overline{\phi}_1(t))_1\\
B_{21}F_{11}(t)\overline{\phi}_1(t) + B_{22}F_{22}(t)\overline{\phi}_2(t) + (\overline{\sR}(t)\overline{\phi}(t))_2\\
B_{22}F_{22}(t)\overline{\phi}_2(t) + (\overline{\sR}(t)\overline{\phi}(t))_2 - (\sR(t)\overline{\phi}_1(t))_2\end{bmatrix} = \tilde{B}F(t)\overline{\phi}(t) + n_1(t)$, \\
where $\tilde{B} := \begin{bmatrix}
B_{11} & 0\\
B_{21} & B_{22}\\
0 & B_{22}
\end{bmatrix}$, and $n_1(t) := \begin{bmatrix}
(\sR(t)\overline{\phi}_1(t))_1\\
(\overline{\sR}(t)\overline{\phi}(t))_2\\
(\overline{\sR}(t)\overline{\phi}(t))_2 - (\sR(t)\overline{\phi}_1(t))_2
\end{bmatrix}$.
Then,
\begin{align*}
\sigma_{F(t)} =& \bbE[(\tilde{B}F\overline{\phi}(t) + n_1(t))(\tilde{B}F\overline{\phi}(t) + n_1(t))']\\
\equiv& \bbE[\tilde{B}F(t)\overline{\phi}(t)n_1(t)' + \tilde{B}F(t)\overline{\phi}(t)(\tilde{B}F(t)\overline{\phi}(t))' \\
&~~+ n_1(t)n_1(t)' + n_1(t)(\tilde{B}F(t)\overline{\phi}(t))']\\
=& \tilde{B}F(t)\bbE[\overline{\phi}(t)n_1(t)'] + \tilde{B}F(t)\bbE[\overline{\phi}(t)\overline{\phi}(t)']F(t)'\tilde{B}' \\
&~~+ \bbE[n_1(t)\overline{\phi}(t)']F(t)'\tilde{B}'\\
=& \tilde{B}F(t)V(t) + \tilde{B}F(t)\overline{T}(t)F(t)'\tilde{B}' + V(t)'F(t)'\tilde{B}'
\end{align*}
where $V(t) = \bbE[\overline{\phi}(t)n_1(t)']$ and equivalence in the $2^{nd}$ line is from taking partial derivative of $\sigma_{F(t)}$ w.r.t. $F(t)$.

Taking the partial derivative of $\text{trace}[\sigma_{F(t)}M(t+1)']$,
\begin{align*}
&\frac{\partial \text{trace}[\sigma_{F(t)}M(t+1)']}{\partial F(t)} \\
=& \frac{\partial}{\partial F(t)}\Big(\text{trace}[(\tilde{B}F(t)V(t) + \tilde{B}F(t)\overline{T}(t)F(t)'\tilde{B}' \\
&\hspace{0.5in}+ V(t)'F(t)'\tilde{B}')M(t+1)']\Big)\\
=& \tilde{B}'M(t+1)V(t)' + \tilde{B}'M(t+1)\tilde{B}F(t)\overline{T}(t) \\
&+ \tilde{B}'M(t+1)'\tilde{B}F(t)\overline{T}(t) + \tilde{B}'M(t+1)'V(t)'
\end{align*}

Substituting into the optimality condition \eqref{eq:optcondns} we get,
\begin{align}
F^*(t) =& - \Big(2R + \tilde{B}'M(t+1)\tilde{B} + \tilde{B}'M(t+1)'\tilde{B}\Big)^{-1}\times\nonumber\\
&~~~~\Big(2 \Psi^*(t) + \tilde{B}'M(t+1)V(t)' \nonumber\\
&\hspace{0.5in}+ \tilde{B}'M(t+1)'V(t)'\Big)\overline{T}^{-1}(t)\nonumber\\
=& -\Big(R + \tilde{B}'M(t+1)\tilde{B}\Big)^{-1}\times\nonumber\\
&~~~~\Big(\Psi^*(t) + \tilde{B}'M(t+1)V(t)'\Big)\overline{T}^{-1}(t)
\label{eq:optgainmatrix}
\end{align}
where the last equality uses $M(t)$ being symmetric.

The following is a method to obtain the gain matrix $F^*(t)$ explicitly from the above equation. Rearranging~\eqref{eq:optgainmatrix} results in the following equation:
\begin{align}
\Big(R + \tilde{B}'M(t+1)\tilde{B}\Big)F^*(t)\overline{T}(t) + \Psi^*(t) \nonumber\\
 + \tilde{B}'M(t+1)V(t)'\Big)
\end{align}

Using $\Psi_{11}^*(t) = \Psi_{22}^*(t) = 0$, we get,
\begin{align}
E_1' \Big(R + \tilde{B}'M(t+1)\tilde{B}\Big)F^*(t)\overline{T}(t) E_1 \nonumber\\
 = -E_1' \tilde{B}'M(t+1)V(t)'\Big) E_1 \\
E_2' \Big(R + \tilde{B}'M(t+1)\tilde{B}\Big)F^*(t)\overline{T}(t) E_2 \nonumber\\
 = -E_2' \tilde{B}'M(t+1)V(t)'\Big) E_2,
\end{align}
where $E_1$ and $E_2$ are suitable partitions of the identity matrix $I = [ E_1 ~~ E_2]$. We also have, $F_{12}^*(t) = F_{21}^*(t) = 0$, giving us $F^*(t)  = E_1F_{11}^*(t)E_1' + E_2 F_{22}^*(t)E_2'$. Substituting into the above equation, we get,
\begin{align}
\Big[R + \tilde{B}'M(t+1)\tilde{B}\Big]_{11}F_{11}^*(t)\overline{T}_{11}(t) \nonumber \\
+ \Big[R + \tilde{B}'M(t+1)\tilde{B}\Big]_{12}F_{22}^*(t)\overline{T}_{21}(t) \nonumber\\
 = -\Big[\tilde{B}'M(t+1)V(t)'\Big]_{11} \\
\Big[R + \tilde{B}'M(t+1)\tilde{B}\Big]_{21}F_{11}^*(t)\overline{T}_{12}(t) \nonumber \\
+ \Big[R + \tilde{B}'M(t+1)\tilde{B}\Big]_{22}F_{22}^*(t)\overline{T}_{22}(t) \nonumber\\
 = -\Big[\tilde{B}'M(t+1)V(t)'\Big]_{22},
\end{align}
where $[A]_{ij} = E_i'AE_j$. This is a complete set of linear equations that can be solved directly, for example, by vectorization using the Kronecker product $A X B = C \Leftrightarrow (B'\otimes A)x = c$, where $\otimes$ is the Kronecker product and $x$, $c$ are vectors made by assembling columns of $X$, $C$.
\end{proof}


\begin{proof}[Proof of Theorem~\ref{thm:optcontrollerformO}]
Only the outline of the proof is given here for brevity. As in the proof of Theorem~\ref{thm:optcontrollerform}, we define a modified input,
\begin{align*}
\overline{u}(t) := u(t) - F^*(t)\phi_o(t),
\end{align*}
where $\phi_o(t) := \begin{bmatrix}
y_1(t) - C_{11}\hat{x}_1(t)\\
y_2(t) - C_{21}\doublehat{x}_1(t) - C_{22}\doublehat{x}_2(t)
\end{bmatrix}$.

It can be shown that $\overline{u}(t)$ can be decomposed into a coordinator's action, based on common history $\sH_1(t)$, and the remaining part based on the entire information available to player $2$~\cite[Theorem 10]{LeNa13}. That is,
\begin{equation*}
\overline{u}(t) = \tilde{u}(t) + \tilde{\tilde{u}}(t),
\end{equation*}
where $\tilde{u}(t)$ is obtained by fixing $\overline{u}(t) := \tilde{u}(t) + \hat{K}(t)\doublehat{x}(t)$ and solving the optimal control problem. Subsequently, $\tilde{\tilde{u}}(t)$ is obtained by fixing $\overline{u}(t) := K(t)\hat{x}(t) + [0~~\tilde{\tilde{u}}]'$ and solving the LQR problem~\cite[Theorem 10]{LeNa13}. Person-by-person optimality implies global optimality since the problem is partially nested~\cite{MaMaRo12}.

It can be verified through the above approach that the additional $F^*(t)\phi_o(t)$ term does not affect the computation of $\tilde{u}(t)$ and $\tilde{\tilde{u}}(t)$ and,
\begin{align*}
\tilde{u}(t) &= (K(t) - \hat{K}(t))\hat{x}(t), \tilde{\tilde{u}}(t) &= \begin{bmatrix}
0\\
\hat{K}(t)(\doublehat{x}(t) - \hat{x}(t))
\end{bmatrix}
\end{align*}
\end{proof}

\bibliographystyle{ieeetr}
\bibliography{main-tcns14-decentralized-R4}

\end{document}